\newtheorem{theorem}{Theorem}[section]
\newtheorem{question}[theorem]{Question}
\theoremstyle{definition}
\theoremstyle{remark}
\newtheorem*{remark}{Remark}
\newtheorem*{remarks}{Remarks}
\numberwithin{equation}{section}
\newcommand{\CC}{{\mathbb C}}
\newcommand{\DD}{{\mathbb D}}
\newcommand{\TT}{{\mathbb T}}
\newcommand{\cD}{{\cal D}}
\newcommand{\cH}{{\cal H}}
\DeclareMathOperator{\hol}{Hol}
\begin{document}

\title[Dirichlet spaces with superharmonic weights]
 {Dirichlet spaces with superharmonic weights\\ and de Branges--Rovnyak spaces}

\author[El-Fallah]{O. El-Fallah}
\address{Laboratoire Analyse et Applications (URAC/03), Universit\'e Mohamed V,\\
B.P. 1014 Rabat, Morocco}
\email{elfallah@fsr.ac.ma}

\thanks{El-Fallah supported by  CNRST URAC/03 \&
Acad\'emie Hassan\! II des sciences et techniques.\\
Kellay supported by UMI-CRM.\\
Mashreghi supported by NSERC.\\
Ransford supported by NSERC and the Canada research chairs program}

\author[Kellay]{K. Kellay}
\address{Institut de Math\'ematiques de Bordeaux, Universit\'e de Bordeaux,\\
351 cours de la Lib\'eration, 33405 Talence Cedex, France}
\email{karim.kellay@math.u-bordeaux1.fr}

\author[Klaja]{H. Klaja}
\address{D\'epartement de math\'ematiques et de statistique, Universit\'e Laval,\\  
1045 avenue de la M\'edecine, Qu\'ebec (QC),  Canada G1V 0A6}
\email{hubert.klaja@gmail.com}

\author[Mashreghi]{J. Mashreghi}
\address{D\'epartement de math\'ematiques et de statistique, Universit\'e Laval,\\  
1045 avenue de la M\'edecine, Qu\'ebec (QC),  Canada G1V 0A6}
\email{javad.mashreghi@mat.ulaval.ca}

\author[Ransford]{T. Ransford}
\address{D\'epartement de math\'ematiques et de statistique, Universit\'e Laval,\\  
1045 avenue de la M\'edecine, Qu\'ebec (QC), Canada G1V 0A6}
\email{ransford@mat.ulaval.ca}

\subjclass{Primary 31C25, 46E22; Secondary 30H10 }

\keywords{Dirichlet space; superharmonic; de Branges--Rovnyak space}

\date{10 March 2015}

\begin{abstract}
We consider Dirichlet spaces with superharmonic weights.
This class contains both the harmonic weights and the power weights.
Our main result is a characterization of the Dirichlet spaces with superharmonic weights 
that can be identified as  de Branges--Rovnyak spaces. 
As an application, we obtain the dilation inequality
\[
\cD_\omega(f_r)\le \frac{2r}{1+r}\cD_\omega(f) \qquad(0\le r<1),
\]
where $\cD_\omega$ denotes the Dirichlet integral with superharmonic weight $\omega$,
and $f_r(z):=f(rz)$ is the $r$-dilation of the holomorphic function $f$.
\end{abstract}

\maketitle

\section{Introduction}\label{S:intro}

Let $\DD$ be the open unit disk, 
let $dA$ be normalized area measure on $\DD$, 
and let $\hol(\DD)$ be the space of holomorphic functions on $\DD$.
Given a non-negative function $\omega\in  L^1(\DD)$ and $f\in\hol(\DD)$, 
we define
\[
\cD_\omega(f):=\int_\DD|f'(z)|^2\,\omega(z)\,dA(z).
\]
The \emph{weighted Dirichlet space} $\cD_\omega$ 
is the set of $f\in\hol(\DD)$ with $\cD_\omega(f)<\infty$.
Obviously, if $\omega\equiv1$, 
then $\cD_\omega$ is just the classical Dirichlet space $\cD$.

In this article we shall be mainly interested in the case 
where $\omega$ is a superharmonic weight. 
This class of weights was introduced  by Aleman \cite{Al93}. 
For such $\omega$,  we automatically have $\omega\in L^1(\DD)$, and,
provided that $\omega\not\equiv0$,
we also have $\cD_\omega\subset H^2$, the Hardy space.
It is customary to define
\[
\|f\|_{\cD_\omega}^2:=\|f\|_{H^2}^2+\cD_\omega(f) 
\qquad(f\in\cD_\omega),
\]
making $\cD_\omega$ a Hilbert space.

The class of superharmonic weights includes two important subclasses:
\begin{itemize}
\item the power weights
$\omega(z):=(1-|z|^2)^\alpha ~(0\le\alpha\le 1)$,
which form a scale linking the classical Dirichlet space ($\alpha=0$) 
to the Hardy space ($\alpha=1$).
\item the harmonic weights, introduced by Richter \cite{Ri91} in connection with 
his analysis of shift-invariant subspaces of the classical Dirichlet space.
\end{itemize}

Superharmonic weights have the important property that 
the dilates of $f_r(z):=f(rz)$ of each function $f\in\cD_\omega$ satisfy
\begin{equation}\label{E:frineq}
\cD_\omega(f_r)\le C\cD_\omega(f)
\qquad(0\le r<1),
\end{equation}
where $C$ is an absolute constant. 
From this, it is not hard to deduce that
$\|f_r-f\|_{\cD_\omega}\to0$ as $r\to1^-$, 
and that polynomials are dense in $\cD_\omega$.

Inequality \eqref{E:frineq} was  first proved by Richter and Sundberg \cite{RS91} 
in the case where $\omega$ is harmonic, with $C=4$. 
It was generalized to superharmonic weights by Aleman \cite{Al93}, 
with an improved  constant $C=5/2$. 
For harmonic weights, 
the constant was further improved to  $C=1$ by Sarason \cite{Sa97}.

The basis of Sarason's method was to show that, 
if $\omega=P_\zeta$ (the Poisson kernel at a point $\zeta\in\TT$),
then it is possible to identify $\cD_\omega$ as 
a de Branges--Rovnyak space $\cH(b)$, the norms being identical
(we shall define $\cH(b)$ later). 
Working within $\cH(b)$, 
one can deduce that \eqref{E:frineq} holds with $C=1$, 
at least for these special $\omega$.
Inequality \eqref{E:frineq} for  general harmonic $\omega$  
then follows easily by using an averaging argument.

It turns out that Sarason's construction has a sort of converse. 
The only harmonic weights $\omega$ for which 
$\cD_\omega$ can be identified as a de Branges--Rovnyak space 
are multiples of $P_\zeta$, where $\zeta\in\TT$. 
This was proved in \cite{CGR10}. 
(One can also study a weaker notion of `identified', 
where the spaces $\cD_\omega$ and $\cH(b)$ are allowed to carry different norms. 
This appears to be more subtle: see \cite{CR13}.)

Our purpose in this paper is twofold. 
First, we shall extend Sarason's result 
by exhibiting a new family of superharmonic weights $\omega$
for which $\cD_\omega$ can be identified as a de Branges--Rovnyak space $\cH(b)$. 
We shall deduce from this that
the inequality \eqref{E:frineq} holds  with $C=1$ for all superharmonic weights. 
Second, we shall prove a converse result, 
characterizing those superharmonic weights $\omega$ 
for which $\cD_\omega$ is equal to an $\cH(b)$.
We shall also consider what happens for more general weights.

\section{Superharmonic weights}

Let $\omega$ be a positive superharmonic function on $\DD$.

By standard results from potential theory,
 $\omega$ is locally integrable on $\DD$,
and $(1/r^2)\int_{|z|\le r}\omega\,dA$ is 
a decreasing function of $r$ for $0<r<1$. 
It follows that $\omega\in L^1(\DD)$,
so it is admissible as a Dirichlet weight.

Now suppose that $\omega\not\equiv0$. 
We shall show that $\cD_\omega\subset H^2$.
By the minimum principle $\omega(z)>0$ for all $z\in\DD$ 
(possibly infinite at some points).
By lower semicontinuity, 
$m:=\inf_{|z|\le1/e}\omega(z)$ is attained and therefore $m>0$.
By the minimum principle again, 
$\omega(z)\ge m\log(1/|z|)$ for all $z$ with $1/e<|z|<1$.
Since the Dirichlet space with weight $\log(1/|z|)$ is just $H^2$,
it follows easily that $\cD_\omega\subset H^2$, as claimed.

It thus makes sense to define the norm 
$\|\cdot\|_{\cD_\omega}$ on $\cD_\omega$ by
\begin{equation}\label{E:Dnormdef}
\|f\|_{\cD_\omega}^2:=\|f\|_{H^2}^2+\cD_\omega(f) 
\qquad(f\in\cD_\omega).
\end{equation}
With respect to this norm, $\cD_\omega$ is a Hilbert space.

We shall make extensive use of the following representation formula
for positive superharmonic functions $\omega$ on $\DD$.
Given such an $\omega$,
there exists a unique positive finite Borel measure $\mu$ 
on $\overline{\DD}$ such that, for all $z\in\DD$,
\begin{equation}\label{E:murep}
\omega(z)
=\int_\DD \log\Bigl|\frac{1-\overline{\zeta}z}{\zeta-z}\Bigr|\frac{2}{1-|\zeta|^2}\,d\mu(\zeta)
+\int_\TT\frac{1-|z|^2}{|\zeta-z|^2}\,d\mu(\zeta).
\end{equation}
(This is  the usual decomposition of $\omega$ as a potential plus a harmonic function.)
We then write $\cD_\mu$ for $\cD_\omega$, 
and further $\cD_\zeta$ for $\cD_{\delta_\zeta}$. 
Thus, for $f\in\hol(\DD)$, 
\[
\cD_\zeta(f)=
\begin{cases}
\displaystyle\int_\DD \log\Bigl|\frac{1-\overline{\zeta}z}{\zeta-z}\Bigr|\frac{2}{1-|\zeta|^2}|f'(z)|^2\,dA(z), 
&\zeta\in\DD,\\
\displaystyle\int_\DD\frac{1-|z|^2}{|\zeta-z|^2}|f'(z)|^2\,dA(z), 
&\zeta\in\TT.
\end{cases}
\]
For general $\mu$, we can recover $\cD_\mu(f)$  from $\cD_\zeta(f)$ via Fubini's theorem: 
\begin{equation}\label{E:average}
\cD_\mu(f)=\int_{\overline{\DD}}\cD_\zeta(f)\,d\mu(\zeta).
\end{equation}

Finally, we shall need the following Douglas-type formula for $\cD_\zeta(f)$.

\begin{theorem}\label{T:Douglas}
Let $f\in H^2$ and let $\zeta\in\overline{\DD}$. 
If $\zeta\in\DD$, then
\begin{equation}\label{E:Douglas}
\cD_\zeta(f)=
\frac{1}{2\pi}\int_\TT \Bigl|\frac{f(\lambda)-f(\zeta)}{\lambda-\zeta}\Bigr|^2\,|d\lambda|.
\end{equation}
If $\zeta\in\TT$ and $f(\zeta):=\lim_{r\to1^-}f(r\zeta)$ exists, 
then the same formula holds.
Otherwise $\cD_\zeta(f)=\infty$.
\end{theorem}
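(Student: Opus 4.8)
The plan is to reduce everything to the two cases $\zeta\in\DD$ and $\zeta\in\TT$, treating the interior case first since it is the cleaner of the two. For $\zeta\in\DD$, I would compute the right-hand side of \eqref{E:Douglas} by expanding $f$ in its Taylor series and integrating term by term, but it is more transparent to introduce the function $g(\lambda):=(f(\lambda)-f(\zeta))/(\lambda-\zeta)$, which is holomorphic on $\DD$ because the numerator vanishes at $\zeta$; one checks $g\in H^2$ when $f\in H^2$. The quantity $\frac{1}{2\pi}\int_\TT|g|^2\,|d\lambda|=\|g\|_{H^2}^2$, and by the standard area formula $\|g\|_{H^2}^2=|g(0)|^2+\int_\DD|g'(z)|^2\,dA(z)$ one can in principle recover a Dirichlet-type integral; but the direct route is cleaner. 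Note $(\lambda-\zeta)g(\lambda)=f(\lambda)-f(\zeta)$, so differentiating gives $g(\lambda)+(\lambda-\zeta)g'(\lambda)=f'(\lambda)$. The real identity I want is the classical fact that for $h\in H^2$,
\[
\frac{1}{2\pi}\int_\TT\Bigl|\frac{h(\lambda)-h(\zeta)}{\lambda-\zeta}\Bigr|^2\,|d\lambda|
=\int_\DD |h'(z)|^2\,k(z,\zeta)\,dA(z),
\]
where $k(z,\zeta)$ is exactly the weight $\log\bigl|\tfrac{1-\overline{\zeta}z}{\zeta-z}\bigr|\tfrac{2}{1-|\zeta|^2}$ appearing in the definition of $\cD_\zeta$. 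To prove this, expand in reproducing-kernel form: write $g=\sum_n g(n)\lambda^n$, use $\|g\|_{H^2}^2=\sum|g(n)|^2$, re-express the coefficients $g(n)$ in terms of those of $f$ via the geometric-series expansion of $1/(\lambda-\zeta)$, and resum. Alternatively, and more elegantly, one invokes Green's theorem / the Littlewood–Paley identity: for a subharmonic function $|h'|^2$ and the Green's function $G_\zeta(z)=\log|(1-\overline\zeta z)/(\zeta-z)|$ of $\DD$ with pole at $\zeta$, one has $\frac{1}{2\pi}\int_\TT |(h(\lambda)-h(\zeta))/(\lambda-\zeta)|^2|d\lambda| - |h'(\zeta)|^2\cdot(\text{something})$ matched against $\int_\DD |h'|^2 \Delta G_\zeta$; tracking the normalization $\tfrac{2}{1-|\zeta|^2}$ is the bookkeeping step.

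For $\zeta\in\TT$, the weight is the Poisson kernel $P_\zeta(z)=(1-|z|^2)/|\zeta-z|^2$, and the cleanest argument is a limiting one: apply the interior case at points $\zeta_\rho:=\rho\zeta$ with $\rho\uparrow 1$ and pass to the limit on both sides. On the integral side, $\cD_{\zeta_\rho}(f)\to\cD_\zeta(f)$ by monotone convergence once one checks that the weights increase (or at least that one can dominate/minorize suitably) as $\rho\uparrow1$; the potential-theoretic weight $\log|(1-\overline{\zeta_\rho}z)/(\zeta_\rho-z)|\cdot\tfrac{2}{1-\rho^2}$ converges pointwise to $P_\zeta(z)$, which is the content of the representation formula \eqref{E:murep} specialized to a point mass, so Fatou's lemma gives $\cD_\zeta(f)\le\liminf_\rho\cD_{\zeta_\rho}(f)$. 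On the boundary-integral side, $|(f(\lambda)-f(\zeta_\rho))/(\lambda-\zeta_\rho)|^2\to|(f(\lambda)-f(\zeta))/(\lambda-\zeta)|^2$ for a.e.\ $\lambda\in\TT$ provided the radial limit $f(\zeta)$ exists, again by dominated or monotone convergence (the denominators stay bounded below away from $\lambda=\zeta$, and near $\lambda=\zeta$ one uses that $g_\rho(\lambda)=(f(\lambda)-f(\zeta_\rho))/(\lambda-\zeta_\rho)$ is an $H^2$ function with controlled norm). Combining the two limits yields \eqref{E:Douglas} for $\zeta\in\TT$ when $f(\zeta)$ exists. When the radial limit fails to exist, one argues that $\cD_\zeta(f)=\infty$: if $\cD_\zeta(f)<\infty$ then $g(\lambda):=(f(\lambda)-c)/(\lambda-\zeta)$ would, for a suitable constant $c$, lie in $H^2$ (this is exactly the finiteness of the boundary integral obtained from the interior approximation), forcing $f=c+(\lambda-\zeta)g$ to have a radial limit equal to $c$ at $\zeta$, a contradiction.

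The main obstacle I anticipate is the boundary case $\zeta\in\TT$: specifically, justifying the interchange of limit and integral in $\lim_{\rho\to1^-}\frac{1}{2\pi}\int_\TT|(f(\lambda)-f(\zeta_\rho))/(\lambda-\zeta_\rho)|^2\,|d\lambda|$, because the integrand has a potential singularity accumulating at $\lambda=\zeta$ as $\rho\to1$. The right tool is to recognize each $g_\rho$ as an element of $H^2$ with $\|g_\rho\|_{H^2}^2=\cD_{\zeta_\rho}(f)$ (the already-proved interior identity), note that $g_\rho\to g$ locally uniformly on $\DD$ where $g(\lambda)=(f(\lambda)-f(\zeta))/(\lambda-\zeta)$, and deduce via lower semicontinuity of the $H^2$ norm under such convergence that $\|g\|_{H^2}^2\le\liminf\|g_\rho\|_{H^2}^2$, hence $\tfrac1{2\pi}\int_\TT|(f(\lambda)-f(\zeta))/(\lambda-\zeta)|^2|d\lambda|\le\liminf\cD_{\zeta_\rho}(f)=\cD_\zeta(f)$; the reverse inequality comes from the other Fatou estimate, and together they pin down equality. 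The case analysis on whether $f(\zeta)$ exists then falls out of whether $\liminf_\rho\cD_{\zeta_\rho}(f)$ is finite.
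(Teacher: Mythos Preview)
The paper does not actually prove Theorem~\ref{T:Douglas}; it simply cites Aleman's habilitation \cite{Al93} for the proof. So there is no in-paper argument to compare against, and your proposal has to be judged on its own merits.

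For $\zeta\in\DD$ your strategy is sound in outline: setting $g(\lambda)=(f(\lambda)-f(\zeta))/(\lambda-\zeta)$, one has $g\in H^2$ and the right-hand side of \eqref{E:Douglas} is $\|g\|_{H^2}^2$; relating this to the weighted area integral of $|f'|^2$ is indeed a Green's-formula/Littlewood--Paley computation (the case $\zeta=0$ is exactly the classical Littlewood--Paley identity). Your sketch is vague at the key step---the factor $2/(1-|\zeta|^2)$ does not drop out of a naive application of Green's theorem to $g$, since $g'$ and $f'$ are related in a nontrivial way---but a direct power-series verification along the lines you mention does go through.

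The boundary case $\zeta\in\TT$, however, contains a genuine gap. Your two limiting inequalities point the \emph{same} way: Fatou on the area side gives $\cD_\zeta(f)\le\liminf_{\rho}\cD_{\zeta_\rho}(f)$, and lower semicontinuity of the $H^2$ norm gives $\|g\|_{H^2}^2\le\liminf_\rho\|g_\rho\|_{H^2}^2=\liminf_\rho\cD_{\zeta_\rho}(f)$. From these you can only conclude that both $\cD_\zeta(f)$ and $\|g\|_{H^2}^2$ lie below the same $\liminf$; nothing forces them to coincide. Your claim that ``$\liminf\cD_{\zeta_\rho}(f)=\cD_\zeta(f)$'' is precisely what is unproved, and the phrase ``the reverse inequality comes from the other Fatou estimate'' is not correct---no Fatou-type inequality is available in the reverse direction here. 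The standard repair is to fix $\zeta\in\TT$ and approximate $f$ rather than $\zeta$: first establish \eqref{E:Douglas} for $f$ holomorphic across $\overline{\DD}$ (where continuity from the interior case is legitimate because everything is uniformly bounded), then pass from the dilates $f_r$ to $f$ by monotone convergence on each side separately. The final clause (radial limit fails $\Rightarrow$ $\cD_\zeta(f)=\infty$) also needs more than you wrote: showing that $\cD_\zeta(f)<\infty$ forces the radial limit to exist is a separate lemma in \cite{RS91,Al93}, and your weak-compactness sketch tacitly assumes $f(\rho\zeta)$ stays bounded, which is part of what must be proved.
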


For a proof, and more background on superharmonic weights, see \cite{Al93}.

\section{De Branges--Rovnyak spaces}\label{S:deBR}

The de Branges--Rovnyak spaces  
are a family of (not necessarily closed) subspaces $\cH(b)$ of $H^2$,
para\-metrized by elements $b$ of the closed unit ball of $H^\infty$.
They  were introduced by de Branges and Rovnyak 
in the appendix of \cite{dBR66a} 
and further studied in  \cite{dBR66b}.
For background information we refer to the books of 
de Branges and Rovnyak \cite{dBR66b}, Sarason \cite{Sa94},
and the forthcoming monograph of Fricain and Mashreghi \cite{FM15}.

Given $\psi\in L^\infty(\TT)$, we define
the  Toeplitz operator $T_\psi:H^2\to H^2$ by
\[
T_\psi f:=P_+(\psi f) \qquad(f\in H^2),
\]
where $P_+:L^2(\TT)\to H^2$ is 
the orthogonal projection of $L^2(\TT)$ onto $H^2$. 
Clearly $T_\psi$ is a bounded operator on $H^2$,
and its adjoint is $T_{\overline{\psi}}$.

Given $b\in H^\infty$
with $\|b\|_{H^\infty}\le1$,
the \emph{de Branges--Rovnyak space}
$\cH(b)$ is defined as the image of $H^2$ 
under the operator $(I-T_ bT_{\overline b})^{1/2}$.
We put a norm on $\cH(b)$ making $(I-T_bT_{\overline{b}})^{1/2}$ 
a partial isometry from $H^2$ onto $\cH(b)$, namely
\[
\|(I-T_bT_{\overline{b}})^{1/2}f\|_{\cH(b)}:=\|f\|_{H^2}
\qquad(f\in H^2\ominus \ker(I-T_bT_{\overline{b}})^{1/2})).
\]

We have defined $\cH(b)$ as in Sarason's book \cite{Sa94}. 
The original definition of de Branges and Rovnyak,
based on the notion of complementary space, 
is different but equivalent. 
An explanation of the equivalence can be found in \cite[pp.7--8]{Sa94}. 
A third approach is to start from the positive kernel
\[
k_w^b(z):=\frac{1-\overline{b(w)}b(z)}{1-\overline{w}z}
\qquad(z,w\in\DD),
\]
and to define $\cH(b)$ as 
the reproducing kernel Hilbert space associated with this kernel.

The theory of $\cH(b)$-spaces is pervaded by a fundamental dichotomy,
namely whether $b$ is or is not an extreme  point 
of the closed unit ball of $H^\infty$. 
This is illustrated by following result.

\begin{theorem}\label{T:nonextreme}
Let $b\in H^\infty$ with $\|b\|_{H^\infty}\le 1$. 
The following  are equivalent:
\begin{itemize}
\item[\rm(i)] $b$ is a non-extreme point of the closed unit ball of $H^\infty$;
\item[\rm(ii)] $\log (1-|b|^2)\in L^1(\TT)$;
\item[\rm(iii)]  $\cH(b)$ contains 
all functions holomorphic in a neighborhood of $\overline{\DD}$.
\end{itemize}
Furthermore, if $b$ is non-extreme, then the polynomials are dense in $\cH(b)$.
\end{theorem}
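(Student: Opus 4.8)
The plan is to establish the cycle of implications (i)$\Rightarrow$(ii)$\Rightarrow$(iii)$\Rightarrow$(i), together with the density statement, by combining the three descriptions of $\cH(b)$ given above. First I would recall the characterization of extreme points of the unit ball of $H^\infty$: a function $b$ with $\|b\|_{H^\infty}\le1$ is non-extreme precisely when $\log(1-|b|^2)\in L^1(\TT)$. This is a classical fact (it follows from the description of when $\pm b$ can be perturbed inside the ball, which by an $H^\infty$-interpolation/outer-function argument reduces to the integrability of $\log(1-|b|^2)$). Taking this as known gives (i)$\Leftrightarrow$(ii) immediately, so the real content is connecting these to (iii).

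For (ii)$\Rightarrow$(iii), the key device is the outer function $a$ with $|a|^2=1-|b|^2$ a.e.\ on $\TT$ and $a(0)>0$; this exists exactly because $\log(1-|b|^2)\in L^1(\TT)$. The pair $(b,a)$ satisfies $|a|^2+|b|^2=1$ on $\TT$. I would then invoke the standard function-theoretic description of $\cH(b)$ in the non-extreme case: $f\in\cH(b)$ if and only if $T_{\overline a}f\in aH^2$, i.e.\ $T_{\overline b}f = T_{\overline a}g$ for some $g\in H^2$, with $\|f\|_{\cH(b)}^2=\|f\|_{H^2}^2+\|g\|_{H^2}^2$. If $f$ is holomorphic in a neighborhood of $\overline{\DD}$, then $\overline b f\in L^\infty(\TT)$ behaves well enough that one can exhibit the required $g$ explicitly — concretely, $b f/a$ lies in $H^2$ because $1/a$, while possibly unbounded, has controllable growth and $bf$ vanishes to match it; more cleanly, multiplication by the bounded analytic function $f$ maps the pair $(b,a)$ forward and one checks $T_{\overline b}(bf\text{-type expression})$ lands in $T_{\overline a}H^2$. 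It is also cleanest to first treat reproducing kernels $k_w^b$ and polynomials (which are manifestly in $\cH(b)$ from the kernel description) and then pass to the claimed class by a normal-families/uniform-approximation argument, since a function holomorphic near $\overline{\DD}$ is a uniform limit on $\overline{\DD}$ of its Taylor polynomials.

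For (iii)$\Rightarrow$(i), I would argue by contraposition: if $b$ is extreme, then $\log(1-|b|^2)\notin L^1(\TT)$, and in that case one shows $\cH(b)$ cannot contain, say, the constant function $1$ (or any nonzero polynomial). Indeed, the extreme case is rigid: $\cH(b)$ then contains no nonzero function that extends analytically across $\TT$, because membership would force an integrability condition on $1/(1-|b|^2)$ that fails. This can be extracted from the kernel: $\|1\|_{\cH(b)}^2$ would have to be finite, but evaluating the relevant quadratic form against $1$ produces an integral of $(1-|b|^2)^{-1}$-type weight against $|d\lambda|$ that diverges when $\log(1-|b|^2)\notin L^1$. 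Finally, for the density assertion, in the non-extreme case I would use that the polynomials lie in $\cH(b)$ by (iii) and that the functions $(1-\overline w z)^{-1}=k_w^b/(\,\cdot\,)$ span a dense subspace; more directly, one shows $f_r\to f$ in $\cH(b)$-norm for every $f\in\cH(b)$ using the contractivity of the dilation on $\cH(b)$, and then approximates each $f_r$ by its Taylor polynomials, which converge in $\cH(b)$ since $f_r$ is holomorphic near $\overline{\DD}$.

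The main obstacle I anticipate is (ii)$\Rightarrow$(iii): producing, for a given $f$ holomorphic near $\overline{\DD}$, the explicit companion function $g\in H^2$ realizing $f\in\cH(b)$, and controlling its norm — this is where the outer function $a$ and the identity $T_{\overline b}T_b + T_{\overline a}T_a = I$ (valid in the non-extreme case) do the real work, and getting the bookkeeping right for the approximation step requires care. The converse direction (iii)$\Rightarrow$(i) is comparatively soft once one has the right rigidity statement about extreme $b$, and (i)$\Leftrightarrow$(ii) is standard.
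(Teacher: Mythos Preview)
The paper does not actually prove this theorem: its ``proof'' consists entirely of citations to Duren's book for (i)$\Leftrightarrow$(ii) and to three different sections of Sarason's monograph for (i)$\Rightarrow$(iii), (iii)$\Rightarrow$(i), and the density of polynomials. So your proposal is not competing with an argument in the paper but with the literature itself, and it should be measured against that.

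Against that standard, there are genuine gaps. In your (ii)$\Rightarrow$(iii) step the proposed companion $g=bf/a$ need not lie in $H^2$: when $|b|$ is close to $1$ on a set of positive measure, $|a|=(1-|b|^2)^{1/2}$ is small there and $b/a$ can fail every $H^p$ condition, regardless of how smooth $f$ is. Your fallback, that polynomials are ``manifestly in $\cH(b)$ from the kernel description'', is exactly the point at issue; the kernel description does not hand you polynomials for free. The standard route is different: one shows that for non-extreme $b$ the space $\cH(b)$ is invariant under the forward shift $S$ (this is where the pair $(b,a)$ and the identity $T_{\overline a}T_a+T_{\overline b}T_b=I$ really enter), and combines this with $1\in\cH(b)$ to get all polynomials, then passes to functions holomorphic near $\overline{\DD}$.

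Your (iii)$\Rightarrow$(i) argument is also not right as stated. There is no formula expressing $\|1\|_{\cH(b)}^2$ as an integral against a weight of the type $(1-|b|^2)^{-1}$, and in fact $1\in\cH(b)$ for some extreme $b$ (take $b(z)=z$, where $\cH(b)$ is the one-dimensional space of constants). More seriously, you are conflating two different failures: $\log(1-|b|^2)\notin L^1(\TT)$ is not the same as divergence of a $(1-|b|^2)^{-1}$--weighted integral, and the latter is neither necessary nor sufficient for the former. The correct contrapositive uses a structural fact about extreme $b$ (for example, that $b\notin\cH(b)$, or equivalently that $\cH(b)$ fails to be $S$-invariant), not a pointwise integrability blow-up. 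Finally, invoking contractivity of dilations on $\cH(b)$ for the density statement is circular in the context of this paper, whose whole purpose is to establish dilation inequalities; in Sarason the density is obtained by a separate argument.
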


\begin{proof}
The equivalence between (i) and (ii) is proved in \cite[Theorem~7.9]{Du00}. 
That (i) implies (iii) is proved in \cite[\S IV-6]{Sa94},
and that (iii) implies (i) follows from \cite[\S V-1]{Sa94}.
Finally, the density of polynomials when $b$ is non-extreme
is proved in \cite[\S IV-3]{Sa94}; 
a constructive proof of this result is given in \cite{EFKMR15}.
\end{proof}

Henceforth we shall simply say that $b$ is `extreme' or `non-extreme', 
it being understood that this relative to the closed unit ball of $H^\infty$.

From the equivalence between (i) and (ii), it follows that, 
if $b$ is non-extreme,
then there exists a unique  outer function $a$ 
such that $a(0)>0$ and  $|a|^2+|b|^2=1$ a.e.\ on $\TT$ 
(see \cite[\S IV-1]{Sa94}). 
We shall call $(b,a)$ a \emph{pair}. 
The following result gives 
a useful characterization of $\cH(b)$ in this case.

\begin{theorem}[\protect{\cite[\S IV-1]{Sa94}}]\label{T:f+}
Let $b$ be non-extreme, 
let $(b,a)$ be a pair and let $f\in H^2$.
Then $f\in\cH(b)$ if and only if 
$T_{\overline b}f\in T_{\overline a}(H^2)$.
In this case, there exists a unique function $f^+\in H^2$
such that $T_{\overline b}f=T_{\overline a}f^+$, and
\begin{equation}\label{E:f+}
\|f\|_{\cH(b)}^2=\|f\|_{H^2}^2+\|f^+\|_{H^2}^2.
\end{equation}
\end{theorem}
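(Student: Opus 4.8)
The plan is to work directly with the operator $D:=(I-T_bT_{\overline b})^{1/2}$, since by definition $\cH(b)=\operatorname{ran}(D)$ carrying the range norm $\|Dh\|_{\cH(b)}=\|h\|_{H^2}$ for $h\in H^2\ominus\ker D$. The starting point is the observation that $T_{\overline b}$ is a contraction on $H^2$, that $D$ is its defect operator $(I-T_{\overline b}^{*}T_{\overline b})^{1/2}$, and that the defect operator of its adjoint is
\[
Q:=(I-T_{\overline b}T_{\overline b}^{*})^{1/2}=(I-T_{|b|^2})^{1/2}=(T_{1-|b|^2})^{1/2}=(T_{\overline a}T_a)^{1/2},
\]
where I have used $T_{\overline b}T_{\overline b}^{*}=T_{\overline b}T_b=T_{|b|^2}$, the relation $|a|^2+|b|^2=1$ a.e.\ on $\TT$, and $T_{\overline a}T_a=T_{|a|^2}$ (legitimate because the non-extreme hypothesis makes $a$ an $H^\infty$ function, its boundary modulus being at most $1$). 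The elementary intertwining identity $SD_S=D_{S^{*}}S$ for a contraction $S$, applied to $S=T_{\overline b}$, then gives the key relation $T_{\overline b}\,D=Q\,T_{\overline b}$. Since $a$ is outer, $T_a$ is injective with dense range, so in the polar decomposition $T_a=UQ$ the partial isometry $U$ is in fact unitary; equivalently $Q=T_{\overline a}U$ and $T_{\overline a}=QU^{*}$. In particular $T_{\overline a}$ is injective, which already yields the uniqueness of $f^+$.

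Next I would establish the implication $f\in\cH(b)\Rightarrow T_{\overline b}f\in T_{\overline a}(H^2)$ together with the norm formula. Write $f=Dh$ with $h\in H^2\ominus\ker D$, so that $\|f\|_{\cH(b)}=\|h\|_{H^2}$. Feeding $Q=T_{\overline a}U$ into the key relation gives $T_{\overline b}f=QT_{\overline b}h=T_{\overline a}(UT_{\overline b}h)$, so $f^+:=UT_{\overline b}h\in H^2$ works. Since $U$ is unitary and $D^2=I-T_bT_{\overline b}$,
\[
\|f\|_{H^2}^2+\|f^+\|_{H^2}^2=\langle D^2h,h\rangle+\|T_{\overline b}h\|_{H^2}^2=\bigl(\|h\|^2-\|T_{\overline b}h\|^2\bigr)+\|T_{\overline b}h\|^2=\|h\|^2=\|f\|_{\cH(b)}^2,
\]
which is \eqref{E:f+}.

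For the converse, suppose $f\in H^2$ and $T_{\overline b}f=T_{\overline a}g$ with $g\in H^2$. For arbitrary $h\in H^2$ I would split
\[
\langle f,h\rangle=\langle f,D^2h\rangle+\langle f,T_bT_{\overline b}h\rangle=\langle Df,Dh\rangle+\langle T_{\overline b}f,T_{\overline b}h\rangle=\langle Df,Dh\rangle+\langle T_{\overline a}g,T_{\overline b}h\rangle,
\]
and then rewrite the last inner product using $T_{\overline a}=QU^{*}$, the self-adjointness of $Q$, and $QT_{\overline b}=T_{\overline b}D$:
\[
\langle T_{\overline a}g,T_{\overline b}h\rangle=\langle U^{*}g,QT_{\overline b}h\rangle=\langle U^{*}g,T_{\overline b}Dh\rangle=\langle T_bU^{*}g,Dh\rangle.
\]
Thus $\langle f,h\rangle=\langle Df+T_bU^{*}g,\,Dh\rangle$, and Cauchy--Schwarz gives $|\langle f,h\rangle|^2\le\|Df+T_bU^{*}g\|^2\,\langle(I-T_bT_{\overline b})h,h\rangle$ for every $h\in H^2$. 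By the standard operator-range criterion (for $A\ge0$, one has $f\in\operatorname{ran}(A^{1/2})$ exactly when $|\langle f,h\rangle|^2\le c\,\langle Ah,h\rangle$ for some constant $c$ and all $h$, the least such $c$ being the square of the range norm of $f$) this forces $f\in\operatorname{ran}(D)=\cH(b)$. The exact value of the norm then comes by a bootstrap: since $f\in\cH(b)$, the first part supplies the unique $f^+$ with $T_{\overline b}f=T_{\overline a}f^+$ and $\|f\|_{\cH(b)}^2=\|f\|^2+\|f^+\|^2$; as $g$ also satisfies $T_{\overline b}f=T_{\overline a}g$, injectivity of $T_{\overline a}$ forces $g=f^+$, and \eqref{E:f+} follows.

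The main obstacle I anticipate is precisely the converse direction: Cauchy--Schwarz only delivers membership in $\cH(b)$ together with the non-sharp estimate $\|f\|_{\cH(b)}^2\le\|Df+T_bU^{*}g\|^2$, so the sharp identity \eqref{E:f+} cannot be read off directly and must be recovered through the bootstrap above, which in turn rests on the uniqueness of $f^+$. A secondary point needing care is to verify that the non-extreme hypothesis genuinely places $a$ in $H^\infty$ (so that $T_a$ is bounded and the Toeplitz product identities such as $T_{\overline a}T_a=T_{|a|^2}$ are valid) and that the outerness of $a$ supplies simultaneously the injectivity and the dense range of $T_a$ --- the two ingredients that upgrade the partial isometry in its polar decomposition to a unitary, which is what makes the whole argument close.
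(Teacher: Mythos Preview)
The paper does not actually prove this theorem: it merely cites Sarason's monograph \cite[\S IV-1]{Sa94} and moves on. So there is no ``paper's own proof'' to compare against.

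Your argument is correct. The two ingredients --- the intertwining $T_{\overline b}\,D=Q\,T_{\overline b}$ coming from the general defect-operator identity $S D_S=D_{S^*}S$, and the polar factorisation $T_a=UQ$ with $U$ unitary because the outer function $a$ makes $T_a$ injective with dense range --- combine cleanly, and the Douglas range-inclusion criterion is exactly the right tool for the converse. The bootstrap at the end (membership via the inequality, then the sharp norm identity via the already-established forward direction and uniqueness of $f^+$) is the standard way to close such arguments. One tiny check you might spell out for completeness is why $U$ is genuinely unitary rather than merely a partial isometry: injectivity of $T_a$ gives $\ker Q=\{0\}$, hence $\overline{\operatorname{ran}(Q)}=H^2$, while dense range of $T_a$ gives $\overline{\operatorname{ran}(T_a)}=H^2$; together these force the initial and final spaces of $U$ to be all of $H^2$. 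Everything else is in order.
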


Given a pair $(b,a)$, 
the function $\phi:=b/a$ is the quotient of two functions in $H^\infty$, 
the denominator being outer.
In other words, $\phi\in N^+$, the Smirnov class. 
Conversely, given $\phi\in N^+$, we can write $\phi=b/a$, 
where $a,b\in H^\infty$ and $a$ is outer. 
Multiplying $a$ and $b$ by an appropriately chosen outer function, 
we may further ensure that $|a|^2+|b|^2=1$ a.e.\ on $\TT$ 
and that $a(0)>0$, 
in other words, that $(b,a)$ is a pair.
Then $a$ and $b$ are uniquely determined.
There is thus a bijection $b\leftrightarrow \phi$ 
between non-extreme functions $b$ 
and elements $\phi$ of the Smirnov class. 
Note that $b$ and $\phi$ have the same inner factor.
Also $\phi$ is bounded if and only if $\|b\|_{H^\infty}<1$. 
In this case, $(I-T_bT_{\overline{b}})$ is an invertible operator on $H^2$, 
and consequently $\cH(b)=H^2$ as vector spaces, 
although the norms may be different.

\section{Extension of Sarason's theorem}

Our goal in this section is to establish the following theorem 
and examine one of its consequences.

\begin{theorem}\label{T:Dmu=Hb}
Let $\zeta\in\overline{\DD}$, 
and let $b\in H^\infty$ be the function 
corresponding to $\phi(z):=z/(1-\overline{\zeta}z)$.
Then $\cD_\zeta=\cH(b)$ with equality of norms.
\end{theorem}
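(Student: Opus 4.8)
The plan is to compute both norms explicitly on a dense set and match them. Fix $\zeta\in\overline{\DD}$ and set $\phi(z):=z/(1-\overline{\zeta}z)$; write $\phi=b/a$ with $(b,a)$ a pair, so that $b=a\phi$ and $|a|^2+|b|^2=1$ a.e.\ on $\TT$, which forces $|a|^2(1+|\phi|^2)=1$, i.e.\ $|a|^2=1/(1+|\phi|^2)$ on $\TT$. Since $\phi$ is bounded (as $\zeta\in\overline{\DD}$ and $|1-\overline{\zeta}\lambda|$ is bounded below on $\TT$ when $\zeta\in\DD$, and equals $\lambda/(1-\overline{\zeta}\lambda)$ with a single pole at $\lambda=\zeta$ on $\TT$ when $|\zeta|=1$ — here one must check $\phi$ still lies in $N^+$ and $|a|^2=1/(1+|\phi|^2)\in L^1$), $b$ is non-extreme, so Theorem~\ref{T:f+} applies: for $f\in H^2$ we have $f\in\cH(b)$ iff $T_{\overline b}f=T_{\overline a}f^+$ for some $f^+\in H^2$, and then $\|f\|_{\cH(b)}^2=\|f\|_{H^2}^2+\|f^+\|_{H^2}^2$. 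Comparing with \eqref{E:Dnormdef}, it suffices to prove that $f\in\cD_\zeta$ iff such an $f^+$ exists, and that in that case $\|f^+\|_{H^2}^2=\cD_\zeta(f)$.

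The key computational step is to identify $f^+$. The relation $T_{\overline b}f=T_{\overline a}f^+$ means $P_+(\overline b f-\overline a f^+)=0$, i.e.\ $\overline b f-\overline a f^+\in \overline{H^2_0}$ (the conjugates of $H^2$ functions vanishing at $0$). Dividing formally by $\overline a$ (legitimate since $a$ is outer) suggests $f^+ = \overline{\phi}^{-1}$-type manipulation; more precisely, using $b/a=\phi$ one expects $f^+$ to be the function whose boundary values satisfy $f^+=\overline{\phi}\,f + (\text{anti-analytic correction})$, and the natural candidate is
\[
f^+(z)=\frac{f(z)-f(\zeta)}{z-\zeta}\cdot(\text{a unit-modulus or outer adjustment}),
\]
the point being that this is exactly the difference quotient appearing in the Douglas formula \eqref{E:Douglas}. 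I would verify this by first doing the case $\zeta\in\TT$, where $\phi(z)=z/(1-\overline\zeta z)$, $|a|^2=|1-\overline\zeta\lambda|^2/(1+|1-\overline\zeta\lambda|^2)$ wait — recompute: $1+|\phi|^2=1+1/|1-\overline\zeta\lambda|^2=(|1-\overline\zeta\lambda|^2+1)/|1-\overline\zeta\lambda|^2$, and one checks directly that $T_{\overline b}f=T_{\overline a}f^+$ with $f^+(z)=(f(z)-f(\zeta))/(z-\zeta)$ by writing $\overline b f-\overline a f^+$ on $\TT$ and confirming it is anti-analytic. Then $\|f^+\|_{H^2}^2=\frac{1}{2\pi}\int_\TT|(f(\lambda)-f(\zeta))/(\lambda-\zeta)|^2|d\lambda|=\cD_\zeta(f)$ by Theorem~\ref{T:Douglas}, and $f\in\cH(b)\iff f^+\in H^2\iff\cD_\zeta(f)<\infty\iff f\in\cD_\zeta$. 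The case $\zeta\in\DD$ is analogous, with the same candidate $f^+$ — here Theorem~\ref{T:Douglas} again gives $\|f^+\|_{H^2}^2=\cD_\zeta(f)$, so the two expansions coincide term by term.

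The main obstacle I anticipate is the bookkeeping around the outer function $a$: the difference-quotient candidate for $f^+$ is only correct up to an outer unimodular-on-$\TT$ factor, and one must pin down that factor so that both the membership criterion $T_{\overline b}f=T_{\overline a}f^+$ and the norm identity $\|f^+\|=\cD_\zeta(f)^{1/2}$ hold simultaneously with the \emph{same} $f^+$. A clean way to sidestep some of this is to work with reproducing kernels: verify that the reproducing kernel $k_w^b(z)=(1-\overline{b(w)}b(z))/(1-\overline w z)$ of $\cH(b)$ coincides with the reproducing kernel of $\cD_\zeta$ (the latter computable from the Douglas formula, or from \eqref{E:murep}). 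Since a Hilbert space of analytic functions is determined by its reproducing kernel, matching kernels gives $\cD_\zeta=\cH(b)$ with equal norms in one stroke; the computation of $k_w^b$ reduces to evaluating $|b(w)|$ and $\overline{b(w)}b(z)$ using $b=a\phi$ and the explicit $\phi$, which is elementary once the formula for $a$ (hence $|a(0)|$, $a(0)>0$) is extracted from $|a|^2=1/(1+|\phi|^2)$ via outer-function recovery. I would pursue the reproducing-kernel route as the primary argument and keep the $f^+$ computation as a cross-check.
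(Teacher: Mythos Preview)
Your outline is correct, and both of your suggested routes would work. Your worry about the difference-quotient candidate is unfounded: $f^+(z)=(f(z)-f(\zeta))/(z-\zeta)$ is exactly $f^+$, with no unimodular or outer adjustment. Indeed, on $\TT$ one has $\overline{\phi}(\lambda)=1/(\lambda-\zeta)$, so $\overline{b}f-\overline{a}f^+=\overline{a}\cdot f(\zeta)/(\lambda-\zeta)=f(\zeta)\,\overline{\lambda a(\lambda)/(1-\overline{\zeta}\lambda)}$; since $a$ and $1-\overline{\zeta}z$ are both outer and $|a|/|1-\overline{\zeta}\lambda|=(|1-\overline{\zeta}\lambda|^2+1)^{-1/2}$ is bounded on $\TT$, the quotient $a/(1-\overline{\zeta}z)$ lies in $H^\infty$, so the displayed expression is in $\overline{H^2_0}$ and $P_+$ annihilates it. Combined with the Douglas formula this gives $\|f^+\|_{H^2}^2=\cD_\zeta(f)$ for every $f\in\cD_\zeta$ at once.

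The paper's proof takes a different, more economical route. Rather than identifying $f^+$ for general $f$, it tests only on Cauchy kernels $k_w(z)=1/(1-\overline{w}z)$, which span a dense subspace, and exploits the eigenvector identity $T_{\overline{h}}k_w=\overline{h(w)}k_w$ (valid for every $h\in H^\infty$) to obtain $k_w^+=\overline{\phi(w)}k_w$ immediately, without ever knowing $a$ or $b$ individually. Then $\|k_w^+\|_{H^2}^2=|\phi(w)|^2/(1-|w|^2)$ is matched against the Douglas-formula value of $\cD_\zeta(k_w)$, and a sesquilinearity-plus-holomorphy argument upgrades equality of $\|k_w\|^2$ to equality of all inner products. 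Your proposed primary route, matching $k_w^b$ against the reproducing kernel of $\cD_\zeta$, would also succeed but is heavier: it needs either the explicit formula for $b$ (which the paper records in a remark but deliberately avoids using) or a closed form for the $\cD_\zeta$-kernel. The paper's Cauchy-kernel trick sidesteps both. Your difference-quotient computation is a genuine alternative that handles all $f$ simultaneously and is arguably more transparent; the paper's argument is shorter for the statement at hand and is what makes the explicit $(a,b)$ unnecessary.
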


\begin{remarks}
(1) This theorem extends Sarason's result \cite{Sa97}, 
which is the special case $\zeta\in\TT$.

(2) A computation shows that, 
with $\phi(z)=z/(1-\overline{\zeta}z)$, we have
\[
b(z)=\frac{z}{A-Bz}
\qquad\text{and}\qquad
a(z)=\frac{1-\overline{\zeta}z}{A-Bz},
\]
where
\[
A:=\Bigl(\frac{2+|\zeta|^2+\sqrt{4+|\zeta|^4}}{2}\Bigr)^{1/2}
\quad\text{and}\quad
B:=\Bigl(\frac{2+|\zeta|^2-\sqrt{4+|\zeta|^4}}{2}\Bigr)^{1/2}\frac{\overline{\zeta}}{|\zeta|}.
\]
However, we do not need the precise formulas for $b$ and $a$ in what follows.
\end{remarks}

\begin{proof}
As the result is already known for $\zeta\in\TT$, 
we shall concentrate on the case $\zeta\in\DD$.
In this case,  $\cD_\zeta$ and $\cH_b$ are both equal to $H^2$ as vector spaces,
with equivalent norms.
For $\cD_\zeta$ this follows from Theorem~\ref{T:Douglas},
and for $\cH(b)$ we already observed this to be true whenever $\phi$ is bounded.
The content of the theorem is that 
the norms on $\cD_\zeta$ and $\cH(b)$ are in fact identical.

For $w\in\DD$, let $k_w$ denote the Cauchy kernel, namely
\[
k_w(z):=\frac{1}{1-\overline{w}z}
\qquad(z\in\DD).
\]
It is enough to prove that 
$\langle f,g\rangle_{\cD_\zeta}=\langle f,g\rangle_{\cH(b)}$
when $f$ and $g$ are finite linear combinations of Cauchy kernels, 
since such $f,g$ are dense in $H^2$.
By sesquilinearity, this reduces to checking that 
$\langle k_{w_1},k_{w_2}\rangle_{\cD_\zeta}=\langle k_{w_1},k_{w_2}\rangle_{\cH(b)}$ 
for all $w_1,w_2\in\DD$. As both sides of this last equation 
are holomorphic in $w_2$ and antiholomorphic in $w_1$,
it is sufficient to prove it in the case when $w_1=w_2$.
Thus we need to show that 
$\|k_w\|_{\cD_\zeta}^2=\|k_w\|_{\cH(b)}^2$ for all $w\in\DD$.
By \eqref{E:Dnormdef} and \eqref{E:f+}, 
this amounts to verifying that 
$\cD_\zeta(k_w)=\|k_w^+\|_{H^2}^2$ for all $w\in\DD$,
which we now proceed to do.

On the one hand, by Theorem~\ref{T:Douglas}, we have
\[
\cD_\zeta(k_w)
=\frac{1}{2\pi}\int_\TT \Bigl|\frac{k_w(\lambda)-k_w(\zeta)}{\lambda-\zeta}\Bigr|^2\,|d\lambda|
=\Bigl\|\frac{\overline{w}}{1-\overline{w}\zeta}k_w\Bigr\|_{H^2}^2.
\]
The Cauchy kernel is the reproducing kernel for $H^2$, 
so $\langle f,k_w\rangle_{H^2}=f(w)$ for all $f\in H^2$, 
and in particular $\|k_w\|_{H^2}^2=k_w(w)=1/(1-|w|^2)$. 
Hence
\[
\cD_\zeta(k_w)=\frac{|w|^2}{|1-\overline{w}\zeta|^2(1-|w|^2)}.
\]

On the other hand,
by a standard property of adjoints of multiplication operators acting on reproducing kernels,
we have $T_{\overline{h}}k_w=\overline{h(w)}k_w$ 
for all $h\in H^\infty$ and $w\in\DD$.
In particular  this is true when $h=b$ and when $h=a$, 
where $(b,a)$ is the pair with $b/a=\phi$.
In the notation of Theorem~\ref{T:f+}, 
it follows that $k_w^+=\overline{\phi(w)}k_w$, whence
\[
\|k_w^+\|_{H^2}^2=|\phi(w)|^2\|k_w\|_{H^2}^2
=\frac{|w|^2}{|1-\overline{w}\zeta|^2(1-|w|^2)}.
\]
Thus $\|k_w^+\|_{H^2}^2=\cD_\zeta(k_w)$ for all $w\in\DD$, as desired.
\end{proof}

Using the same basic idea as in \cite{Sa97},
we can use this theorem to deduce that \eqref{E:frineq} holds 
for all superharmonic weights $\omega$
with constant $C=1$. 
In fact, just as in \cite{Sa97}, we have the following even stronger result.

\begin{theorem}
Let $\mu$ be a finite positive measure on $\overline{\DD}$ 
and let $f\in\cD_\mu$. Then
\[
\cD_\mu(f_r)\le \frac{2r}{1+r}\cD_\mu(f) \qquad(0\le r< 1).
\]
\end{theorem}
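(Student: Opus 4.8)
The plan is to reduce everything to the pointwise case $\mu=\delta_\zeta$ via the averaging identity \eqref{E:average}, and then to exploit the identification $\cD_\zeta=\cH(b)$ from Theorem~\ref{T:Dmu=Hb} so that the problem becomes a contractivity statement about dilation inside a de Branges--Rovnyak space. Concretely, by \eqref{E:average} we have $\cD_\mu(f_r)=\int_{\overline{\DD}}\cD_\zeta(f_r)\,d\mu(\zeta)$ and $\cD_\mu(f)=\int_{\overline{\DD}}\cD_\zeta(f)\,d\mu(\zeta)$, so it suffices to prove the inequality $\cD_\zeta(f_r)\le \tfrac{2r}{1+r}\cD_\zeta(f)$ for every single $\zeta\in\overline{\DD}$ and every $f\in\cD_\zeta$. (One should first note $f\in\cD_\mu$ forces $\cD_\zeta(f)<\infty$ for $\mu$-a.e.\ $\zeta$, which is enough since the claimed inequality is trivial at $\zeta$ where $\cD_\zeta(f)=\infty$.)

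For a fixed $\zeta$, let $b$ be the non-extreme function attached to $\phi(z)=z/(1-\overline\zeta z)$ as in Theorem~\ref{T:Dmu=Hb}, with pair $(b,a)$. By that theorem $\cD_\zeta=\cH(b)$ with equal norms, hence in particular $\cD_\zeta(f)=\|f\|_{\cH(b)}^2-\|f\|_{H^2}^2=\|f^+\|_{H^2}^2$, where $f^+$ is the associated function from Theorem~\ref{T:f+}, characterized by $T_{\overline b}f=T_{\overline a}f^+$. So the target inequality becomes
\[
\|(f_r)^+\|_{H^2}^2\le \frac{2r}{1+r}\,\|f^+\|_{H^2}^2.
\]
Next I would compute $(f_r)^+$ explicitly. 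Writing $f_r=f\circ(rz)$, one tracks how $T_{\overline b}$ and $T_{\overline a}$ interact with dilation. The cleanest route is through the reproducing kernels: since $k_w^+=\overline{\phi(w)}k_w$ (established in the proof of Theorem~\ref{T:Dmu=Hb}) and $(k_w)_r=k_{\bar r w}$ possibly up to a scalar, or more directly using $\langle f_r,k_w\rangle_{H^2}=f(rw)=\langle f,k_{rw}\rangle$, one expresses the relation between $(f_r)^+$ and $f^+$. I expect to find that $(f_r)^+$ equals, roughly, the "$r$-dilation of $f^+$ twisted by $\phi$'s dilation behaviour", i.e.\ something like $(f_r)^+ = r\,(\psi\cdot f^+)_r$ or a similar expression where the extra factor comes from $\phi(rz)/\phi(z) \cdot (\text{stuff})$ and carries an honest factor $r$ together with a contraction.

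The main obstacle is this last computation: identifying $(f_r)^+$ precisely and extracting from it a bound with the sharp constant $\tfrac{2r}{1+r}$ rather than merely $r$ or $r^2$. The constant $\tfrac{2r}{1+r}$ is exactly what appears in Sarason's treatment, and the mechanism there is that after dilation one is left with an $H^2$-norm of a product involving a factor of the form $\tfrac{2\overline r}{1+\overline r z}$ or similar, whose $H^\infty$ or multiplier norm against the relevant subspace is controlled by $\tfrac{2r}{1+r}$; alternatively it emerges from estimating $|\phi(rw)/\phi(w)|\cdot\|k_{rw}\|/\|k_w\|$ type quantities and optimizing. I would follow Sarason's argument line by line in the special model $\phi(z)=z/(1-\overline\zeta z)$, verifying that his estimates go through with $\zeta\in\DD$ just as with $\zeta\in\TT$; the explicit rationality of $b$ and $a$ recorded in Remark~(2) (should it be needed) guarantees no pathology. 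Once the pointwise inequality is in hand, integrating $d\mu(\zeta)$ finishes the proof, and the superharmonic case follows by taking $\mu$ to be the Riesz measure in \eqref{E:murep}.
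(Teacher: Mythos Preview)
Your strategy is exactly the paper's: reduce to $\mu=\delta_\zeta$ via \eqref{E:average}, invoke Theorem~\ref{T:Dmu=Hb} to rewrite $\cD_\zeta(g)$ as $\|g^+\|_{H^2}^2$, and then bound $\|(f_r)^+\|_{H^2}$ in terms of $\|f^+\|_{H^2}$. The one place you are vague---how actually to pass from $f^+$ to $(f_r)^+$ and extract the constant $2r/(1+r)$---is handled in the paper (and in Sarason) not by computing $(f_r)^+$ explicitly through reproducing kernels, but by a short duality argument: for $h\in H^2$ the relation $T_{\overline a}g^+=T_{\overline b}g$ gives
\[
\langle (f_r)^+,ah\rangle_{H^2}
=\langle f_r,bh\rangle_{H^2}
=\langle f,b_rh_r\rangle_{H^2}
=\langle f^+,(\phi_r/\phi)\,a_rh_r\rangle_{H^2},
\]
whence $|\langle (f_r)^+,ah\rangle|\le\|\phi_r/\phi\|_{H^\infty}\|f^+\|_{H^2}\|ah\|_{H^2}$; density of $aH^2$ in $H^2$ (since $a$ is outer) then yields $\|(f_r)^+\|_{H^2}\le\|\phi_r/\phi\|_{H^\infty}\|f^+\|_{H^2}$. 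The constant is precisely $\|\phi_r/\phi\|_{H^\infty}=r(1+|\zeta|)/(1+r|\zeta|)\le 2r/(1+r)$, with no factor of the shape $2\overline r/(1+\overline r z)$ entering. So your proposal is correct and your ``follow Sarason line by line'' fallback lands on exactly this computation; the kernel detour is unnecessary.
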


\begin{proof}
We prove the result for $\mu=\delta_\zeta~(\zeta\in\overline{\DD})$. 
The general case follows by integrating up and using \eqref{E:average}.

By Theorem~\ref{T:Dmu=Hb}, 
we have $\cD_\zeta(f)=\|f^+\|_{\cH_b}^2$, 
where $(b,a)$ is the pair for which 
$\phi(z):=b(z)/a(z)=z/(1-\overline{\zeta}z)$. 
So we need to show that, with this choice of $\phi$, we have
\[
\|(f_r)^+\|_{H^2}^2\le \frac{2r}{1+r}\|f^+\|_{H^2}^2.
\]

Given $h\in H^2$, we have
\[
\langle (f_r)^+,ah\rangle_{H^2}
=\langle f_r,bh\rangle_{H^2}
=\langle f,b_rh_r\rangle_{H^2}
=\langle f^+,(\phi_r/\phi)a_rh_r\rangle_{H^2},
\]
and thus
\[
|\langle (f_r)^+,ah\rangle_{H^2}|
\le \|f^+\|_{H^2}\|\phi_r/\phi\|_{H^\infty}\|ah\|_{H^2}.
\]
As $a$ is an outer function, $aH^2$ is dense in $H^2$, and so 
\[
\|(f_r)^+\|_{H^2}\le\|\phi_r/\phi\|_{H^\infty}\|f^+\|_{H^2}.
\]
Finally, an elementary computation shows that
\[
\|\phi_r/\phi\|_{H^\infty}=r(1+|\zeta|)/(1+r|\zeta|)\le 2r/(1+r),
\]
whence the result.
\end{proof}

\section{A converse result}

Theorem~\ref{T:Dmu=Hb} furnishes a list of couples $(\mu,b)$ 
for which $\cD_\mu=\cH(b)$. 
In this section we prove a converse, which shows that,
apart from scalar multiples taken in a natural sense,
these are the only such couples. 
It generalizes a result of \cite{CGR10}, 
where it was proved in the case when
$\mu$ is a measure on $\TT$.

Note that we may assume from the outset that $b$ is 
a non-extreme point of the closed unit ball of $H^\infty$.
Indeed, if we are to have $\cD_\mu=\cH(b)$, then, 
since $\cD_\mu$ contains all functions holomorphic 
on a neighborhood of $\overline{\DD}$,
the same must be true of $\cH(b)$.  
By Theorem~\ref{T:nonextreme}, this entails that $b$ is non-extreme. 

\begin{theorem}\label{T:converse}
Let $\mu$ be a finite positive measure on $\overline{\DD}$ with $\mu\not\equiv0$.
Let $(b,a)$ be pair and let $\phi:=b/a$.
Then $\cD_\mu=\cH(b)$ with equality of norms
if and only if there exist 
$\zeta\in\overline{\DD}$ and $\alpha\in\CC\setminus\{0\}$ such that
\[
\mu=|\alpha|^2\delta_\zeta 
\qquad\text{and}\qquad 
\phi(z)=\alpha z/(1-\overline{\zeta}z).
\]
\end{theorem}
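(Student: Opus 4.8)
The plan is to prove the non‑trivial direction: assuming $\cD_\mu=\cH(b)$ with equality of norms, deduce that $\mu$ is a point mass and $\phi$ has the stated Blaschke‑type form. The reverse implication follows at once by combining Theorem~\ref{T:Dmu=Hb} with the elementary observation that rescaling $\phi$ by $\alpha$ and $\mu$ by $|\alpha|^2$ preserves equality of norms. For the forward direction, the natural strategy is to exploit reproducing kernels, exactly as in the proof of Theorem~\ref{T:Dmu=Hb}, only now run in reverse. Equality of norms on $\cD_\mu$ and $\cH(b)$ forces equality of the two reproducing kernels, and hence, after subtracting the common $H^2$ part, the identity
\[
\cD_\mu(k_w)=\|k_w^+\|_{H^2}^2=|\phi(w)|^2\|k_w\|_{H^2}^2=\frac{|\phi(w)|^2}{1-|w|^2}
\qquad(w\in\DD),
\]
using $k_w^+=\overline{\phi(w)}k_w$ as before. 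So the whole problem reduces to: which finite positive measures $\mu$ on $\overline{\DD}$ have the property that $w\mapsto(1-|w|^2)\cD_\mu(k_w)$ is the modulus squared of a Smirnov‑class function, and moreover, for those $\mu$, identify that function.

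The key computational step is therefore to get a usable closed form for $\cD_\mu(k_w)$. From the first half of the proof of Theorem~\ref{T:Dmu=Hb} one reads off $\cD_\zeta(k_w)=|w|^2/\big(|1-\overline w\zeta|^2(1-|w|^2)\big)$ for $\zeta\in\overline\DD$, so by the averaging formula~\eqref{E:average},
\[
(1-|w|^2)\cD_\mu(k_w)=\int_{\overline\DD}\frac{|w|^2}{|1-\overline w\zeta|^2}\,d\mu(\zeta)
=|w|^2\int_{\overline\DD}k_w(\zeta)\overline{k_w(\zeta)}\,d\mu(\zeta).
\]
Write $G(w):=(1-|w|^2)\cD_\mu(k_w)/|w|^2=\int_{\overline\DD}|k_w(\zeta)|^2\,d\mu(\zeta)$; this is the Berezin‑type transform of $\mu$ and equals $|\phi(w)/w|^2=|a(0)^{-1}b(w)/w\cdot a(0)/a(w)\cdot\dots|$ — more usefully, $G(w)=|\psi(w)|^2$ where $\psi:=\phi/\mathrm{id}=b/(a\cdot z)\cdot z = \phi(z)/z$, which is again in $N^+$ provided $\phi$ vanishes at $0$ (and $\phi(0)=0$ will drop out because $k_w^+$ already carries a factor matching $\phi(0)=0$; one checks $\cD_\mu(1)=0$ forces $\phi(0)=0$). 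The engine of the proof is then the following rigidity statement: if $\int_{\overline\DD}|k_w(\zeta)|^2\,d\mu(\zeta)$ equals $|\psi(w)|^2$ for some $\psi\in N^+$, then $\mu$ is a single point mass. The cleanest route is to expand both sides as power series in $w$ and $\overline w$: the left side is $\sum_{m,n}\big(\int \zeta^m\overline\zeta^{\,n}\,d\mu\big)w^n\overline w^{\,m}$, i.e.\ its coefficient matrix is the (conjugate) moment matrix of $\mu$, which is a positive‑semidefinite Hankel‑type matrix of rank equal to the number of points in $\operatorname{supp}\mu$ when $\mu$ is atomic (and infinite rank otherwise); the right side, being $|\psi(w)|^2=\overline{\psi(w)}\psi(w)$ with $\psi(w)=\sum c_k w^k$, has coefficient matrix $(\overline{c_n}c_m)$, which has rank $\le 1$. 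Equating them forces $\operatorname{supp}\mu$ to be a single point $\zeta$, so $\mu=c\,\delta_\zeta$ for some $c>0$; then $G(w)=c/|1-\overline w\zeta|^2$, so $\psi(w)=\sqrt c/(1-\overline\zeta w)$ up to a unimodular constant, giving $\phi(z)=\alpha z/(1-\overline\zeta z)$ with $|\alpha|^2=c$, as required.

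The main obstacle I anticipate is making the rank argument rigorous in the presence of mass on $\TT$, where $k_w(\zeta)$ for $\zeta\in\TT$ is only a boundary object and the ``moment matrix'' involves $\int_\TT\zeta^m\overline\zeta^{\,n}\,d\mu$; one must be careful that $\mu$ is a genuine finite measure on the compact set $\overline\DD$, so all these moments are finite and the matrix is a legitimate positive‑semidefinite form, and that the series manipulations converge for $|w|<1$ (they do, uniformly on compacta, by dominated convergence since $|k_w(\zeta)|\le 1/(1-|w|)$). A secondary technical point is the reduction to the case $\phi(0)=0$: one must verify that $\cD_\mu=\cH(b)$ with equal norms forces $b(0)=0$, equivalently $\phi(0)=0$, which comes from evaluating the reproducing‑kernel identity at $w=0$ (both $\cD_\mu(k_0)$ and $\|k_0^+\|^2=|\phi(0)|^2$ must vanish since $k_0\equiv 1$). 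Once $\phi(0)=0$ is in hand, $\psi=\phi/z\in N^+$ and the argument proceeds as above; conversely any $\mu=|\alpha|^2\delta_\zeta$ with $\phi(z)=\alpha z/(1-\overline\zeta z)$ does satisfy $\phi(0)=0$, so no generality is lost. Putting these pieces together yields the theorem.
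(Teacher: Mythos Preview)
Your proposal is correct and follows essentially the same route as the paper: both derive the key identity $|\phi(w)|^2=|w|^2\int_{\overline{\DD}}|1-\overline{w}\zeta|^{-2}\,d\mu(\zeta)$ by evaluating on Cauchy kernels, then expand in powers of $w,\overline{w}$ and compare coefficients to force $\mu$ to be a point mass. Your ``rank~$\le 1$ moment matrix'' formulation is just a repackaging of the paper's moment relation~\eqref{E:moment}; the paper extracts the single case $m=n=1$ and reads it as the Cauchy--Schwarz equality $\mu(\overline{\DD})\int|z|^2\,d\mu=|\int z\,d\mu|^2$, which immediately gives $\int|z-\zeta|^2\,d\mu=0$ without needing the general rank statement you invoke.
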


\begin{proof}
The `if' part follows directly from Theorem~\ref{T:Dmu=Hb}.
For the `only if', we observe that,
if we have equality of norms, 
then $\|f^+\|_{H^2}^2=\cD_\mu(f)$ for all $f$ in the space,
in particular for $f=k_w$, the Cauchy kernels. 
Performing similar calculations 
to those in the proof of Theorem~\ref{T:Dmu=Hb}, 
we obtain the relation
\begin{equation}\label{E:phieqn}
|\phi(w)|^2=|w|^2\int_{\overline{\DD}} \frac{d\mu(z)}{|1-z\overline{w}|^2} 
\qquad(w\in\DD).
\end{equation}
In particular $\phi(0)=0$. 
If we  write $\phi(w)/w$ as a Taylor series,
substitute it into the formula above, 
expand  in powers of $w$ and $\overline{w}$, 
and equate coefficients, 
then we end up with the following relation between the moments of $\mu$:
\begin{equation}\label{E:moment}
\mu(\overline{\DD})\int_{\overline{\DD}}z^n \overline{z}^{m}\,d\mu(z)
=\int_{\overline{\DD}}z^n \,d\mu(z)\int_{\overline{\DD}} \overline{z}^{m}\,d\mu(z)
\qquad(m,n\ge0).
\end{equation}
In particular, we have 
$\mu(\overline{\DD})(\int|z|^2\,d\mu)=(\int z\,d\mu)(\int\overline{z}\,d\mu)$.
This can be re-written as 
$\int|z-\zeta|^2\,d\mu(z)=0$, 
where $\zeta:=(\int z\,d\mu)/\mu(\overline{\DD})\in\overline{\DD}$.
It follows that $\mu=c\delta_\zeta$ for some $c>0$. 
Substituting this back into formula \eqref{E:phieqn}, we find that
$|\phi(w)|^2=c|w|^2/|1-\overline{\zeta}w|^2$,
and so $\phi(w)=\alpha w/(1-\overline{\zeta}w)$,
where $\alpha$ is a complex constant with $|\alpha|^2=c$.
This completes the proof.
\end{proof}

\section{More general weights}

We have identified those superharmonic weights $\omega$ for which 
$\cD_\omega$ is equal to a de Branges--Rovnyak space $\cH(b)$.
What about more  general weights? 
In this section we obtain some results in this direction,
leading to questions that we think are interesting in their own right.

We begin with a result about which functions $b$ can arise in this context.

\begin{theorem}\label{T:binner}
Let $b$ be in the closed unit ball of $H^\infty$. 
If there exists a weight $\omega$ 
with $0<\|\omega\|_{L^1(\DD)}<\infty$
such that $\cH(b)=\cD_\omega$, with equality of norms, 
then $b$ is non-extreme and its inner factor is exactly $z$.
Consequently the inner factor of the associated function $\phi$ is also $z$.
\end{theorem}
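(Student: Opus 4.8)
The plan is to exploit the same Cauchy-kernel computations as in the proofs of Theorems~\ref{T:Dmu=Hb} and~\ref{T:converse}, but now without assuming any \emph{a priori} structure on $\omega$. Since $\cD_\omega=\cH(b)$ with equality of norms and $\cD_\omega$ contains all functions holomorphic on a neighbourhood of $\overline{\DD}$, Theorem~\ref{T:nonextreme} already forces $b$ to be non-extreme; so a pair $(b,a)$ and the function $\phi=b/a$ are available, and Theorem~\ref{T:f+} gives $\|f\|_{\cH(b)}^2=\|f\|_{H^2}^2+\|f^+\|_{H^2}^2$. Equality of norms then reads $\cD_\omega(f)=\|f^+\|_{H^2}^2$ for every $f$ in the space. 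Applying this to $f=k_w$ and using $k_w^+=\overline{\phi(w)}k_w$ (from $T_{\overline h}k_w=\overline{h(w)}k_w$, exactly as in the proof of Theorem~\ref{T:Dmu=Hb}) yields
\[
\cD_\omega(k_w)=|\phi(w)|^2\|k_w\|_{H^2}^2=\frac{|\phi(w)|^2}{1-|w|^2}
\qquad(w\in\DD).
\]

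Next I would compute the left-hand side directly. Using the definition $\cD_\omega(k_w)=\int_\DD|k_w'(z)|^2\omega(z)\,dA(z)$ together with $k_w'(z)=\overline{w}/(1-\overline{w}z)^2=\overline{w}\,k_w(z)^2$, we get
\[
\frac{|\phi(w)|^2}{1-|w|^2}=|w|^2\int_\DD\frac{\omega(z)}{|1-\overline{w}z|^4}\,dA(z)
\qquad(w\in\DD).
\]
In particular $\phi(0)=0$, which already says the inner factor of $\phi$ is divisible by $z$. The real point is to show it is \emph{exactly} $z$, i.e.\ that $\phi(w)/w$ is a non-vanishing outer function; since $b$ and $\phi$ have the same inner factor, this gives the full conclusion. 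Writing $g(w):=\phi(w)/w$, the displayed identity becomes
\[
|g(w)|^2(1-|w|^2)^{-1}=\int_\DD\frac{\omega(z)}{|1-\overline{w}z|^4}\,dA(z).
\]
The right-hand side is a genuine (positive) function on $\DD$; the plan is to argue that $|g(w)|^2$ has no zero inside $\DD$ and that its boundary behaviour forbids a singular inner factor. For the first: if $g(w_0)=0$ for some $w_0\in\DD$, then the right-hand side vanishes at $w_0$, forcing $\omega\equiv0$ a.e., contradicting $\|\omega\|_{L^1}>0$. So $g$ is zero-free in $\DD$; in particular $\phi$ has no Blaschke factor beyond the single zero at the origin.

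The remaining obstacle — and I expect this to be the technical heart — is to rule out a non-trivial singular inner factor in $\phi$ (equivalently in $b$). Here the strategy is to bring in Theorem~\ref{T:Douglas}-type reasoning, or more directly to re-examine the measure side: the argument of Theorem~\ref{T:converse} shows that equality of norms on Cauchy kernels already pins down a great deal, and in the superharmonic case it forced $\omega$ to be (a multiple of) a single $\cD_\zeta$ with $\phi(z)=\alpha z/(1-\overline{\zeta}z)$, which manifestly has inner factor exactly $z$. For a \emph{general} weight one no longer gets that rigidity, but one can still argue as follows: since $g=\phi/z$ is zero-free and holomorphic on $\DD$, write $g=S\cdot O$ with $S$ singular inner and $O$ outer. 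Non-tangential boundary values give $|g(\lambda)|=|O(\lambda)|$ a.e.\ on $\TT$, while the identity above shows $|g(w)|^2$ is comparable, as $|w|\to1$, to $(1-|w|^2)\int_\DD\omega(z)|1-\overline{w}z|^{-4}\,dA(z)$; using that $\omega\in L^1(\DD)$ this quantity stays bounded, and a standard Fatou/dominated-convergence estimate shows $\log|g(w)|$ cannot tend to $-\infty$ along a radius except possibly on a null set, which is incompatible with $S$ being a non-trivial singular inner function (whose presence would force $\log|S(r\lambda)|\to-\infty$ on a set of positive measure of directions). Hence $S\equiv1$, $g$ is outer, and the inner factor of $\phi$ — and therefore of $b$ — is exactly $z$. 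I would present the zero-free step in full and then invoke the singular-inner exclusion as the one genuinely delicate estimate, citing the Smirnov/inner–outer factorization from Theorem~\ref{T:nonextreme}'s circle of ideas and the $L^1$ bound on $\omega$ as the two inputs that make it work.
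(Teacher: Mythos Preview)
Your argument correctly establishes that $b$ is non-extreme, that $\phi(0)=0$, and that $g:=\phi/z$ has no zeros in $\DD$. The gap is in the exclusion of a singular inner factor, where two of the claims are false. First, the assertion that $(1-|w|^2)\int_\DD\omega(z)\,|1-\overline{w}z|^{-4}\,dA(z)$ ``stays bounded'' for $\omega\in L^1(\DD)$ is not true in general: already for the Poisson kernel $\omega=P_\zeta$ with $\zeta\in\TT$ one has $g(w)=1/(1-\overline{\zeta}w)$, which blows up near~$\zeta$. Second, and more seriously, a non-trivial singular inner function $S$ does \emph{not} satisfy $\log|S(r\lambda)|\to-\infty$ on a set of positive measure: by Fatou's theorem $|S(r\lambda)|\to1$ for a.e.\ $\lambda\in\TT$, so radial boundary behaviour cannot distinguish $S$ from the constant~$1$ on any set of positive Lebesgue measure. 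The mechanism you propose for ruling out $S$ therefore does not work, and there is no obvious repair along these lines using only the pointwise identity for $|g(w)|^2$.

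The paper's proof bypasses this difficulty by a short functional-analytic argument. From $\|f^+\|_{H^2}^2=\cD_\omega(f)$ one compares the \emph{zero sets} of the two sides as functionals on $\cH(b)$: on the one hand $f^+=0\iff T_{\overline b}f=0\iff f\perp bH^2\iff f\perp b_iH^2$, where $b_i$ is the inner factor of $b$; on the other hand, since $\|\omega\|_{L^1(\DD)}>0$, one has $\cD_\omega(f)=0\iff f'\equiv0\iff f\perp zH^2$. Equating the two orthogonal complements gives $b_iH^2=zH^2$, whence $b_i=z$. This disposes of the Blaschke and singular parts simultaneously, with no boundary analysis required.
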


\begin{remark}
This result is in stark contrast with what happens 
if we do not insist on equality of norms.
See \cite[Theorem~4.5]{CR13}.
\end{remark}

\begin{proof}
That  $b$ is non-extreme is proved just as in 
the remark preceding the proof of Theorem~\ref{T:converse}.
We then have $\|f^+\|_{H^2}^2=\cD_\omega(f)$ for all $f\in \cH(b)$.
In particular  $f^+=0$ if and only if $\cD_\omega(f)=0$. Now,
\[
f^+=0 \iff T_{\overline{b}}f=0 \iff f\perp bH^2 \iff f\perp b_iH^2,
\]
where $b_i$ is the inner factor of $b$.
Also, since $\|\omega\|_{L^1(\DD)}>0$,
we have 
\[
\cD_\omega(f)=0 \iff f \text{~is a constant~}\iff f\perp zH^2.
\]
Combining these remarks, we conclude that $b_iH^2=zH^2$, 
whence $b_i=z$.

The final statement of the theorem is a consequence of the fact, 
remarked in \S\ref{S:deBR}, 
that $b$ and $\phi$ have the same inner factor.
\end{proof}

We next characterize those weights $\omega$ for which 
$\cD_\omega$ is equal to some de Branges--Rovnyak space $\cH(b)$. 
There is no harm in normalizing $\omega$ so that $\|\omega\|_{L^1(\DD)}=1$.
To state our result we need to introduce some notation.
Given  $\psi\in L^1(\DD)$, 
we denote by $Q\psi$ its \emph{Bergman projection}, given by
\[
Q\psi(z):=\int_\DD \frac{\psi(w)}{(1-\overline{w}z)^2}\,dA(w) 
\qquad(z\in\DD),
\]
and by $B\psi$ its \emph{Berezin transform}, defined by
\[
B\psi(z):=\int_\DD \frac{(1-|z|^2)^2}{|1-\overline{w}z|^4}\psi(w)\,dA(w)
\qquad(z\in\DD).
\]
For further information about these, we refer to \cite{HKZ00}.

\begin{theorem}
Let $\omega$ be a weight with $\|\omega\|_{L^1(\DD)}=1$.
Then there exists $b$ such that $\cD_\omega=\cH(b)$,
with equality of norms,  
if and only if:
\begin{itemize}
\item[\rm(i)] $\cD_\omega\subset H^2$,
\item[\rm(ii)] polynomials are dense in $\cD_\omega$, and
\item[\rm(iii)] $Q\omega$ is outer and satisfies
\begin{equation}\label{E:QBeqn}
(1-|z|^2)|Q\omega(z)|^2 =B\omega(z)
\qquad(z\in\DD).
\end{equation}
\end{itemize}
In this case  the associated $\phi$ is given by  
$\phi(z)=cz(Q\omega)(z)$, where $c\in\TT$.
\end{theorem}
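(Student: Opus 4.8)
The strategy is to reduce everything to Cauchy kernels, exactly as in the proofs of Theorems~\ref{T:Dmu=Hb} and~\ref{T:converse}, and then to recognize the two integrals appearing there as a Bergman projection and a Berezin transform. First I would establish the ``only if'' direction. Assuming $\cD_\omega=\cH(b)$ with equality of norms, condition (i) is immediate and condition (ii) follows from Theorem~\ref{T:nonextreme}, since $b$ must be non-extreme (as in the remark before Theorem~\ref{T:converse}) and polynomials are dense in $\cH(b)$ for non-extreme $b$. For (iii), the key computation is to evaluate $\cD_\omega(k_w)$ and $\|k_w^+\|_{H^2}^2$ for the Cauchy kernel $k_w$, using $T_{\overline h}k_w=\overline{h(w)}k_w$ and the reproducing property $\langle f,k_w\rangle_{H^2}=f(w)$. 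As in Theorem~\ref{T:Dmu=Hb}, one gets $\|k_w^+\|_{H^2}^2=|\phi(w)|^2/(1-|w|^2)$. On the other side, $\cD_\omega(k_w)=\int_\DD|k_w'(z)|^2\omega(z)\,dA(z)$; since $k_w'(z)=\overline w/(1-\overline wz)^2=\overline w\, k_w(z)^2$, this is $|w|^2\int_\DD \omega(z)\,|1-\overline wz|^{-4}\,dA(z)$, which I would like to recognize as $|w|^2\,B\omega(w)/(1-|w|^2)^2$ — wait, that is not quite it; more precisely, writing $\overline w z$ versus $\overline z w$, one finds $\cD_\omega(k_w)=|w|^2 (B\omega)(w)/(1-|w|^2)^2$ after matching the kernel $(1-|z|^2)^2|1-\overline wz|^{-4}$ up to the explicit $(1-|w|^2)$ factors. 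Equating the two expressions of $\|k_w\|_{\cD_\omega}^2=\|k_w\|_{\cH(b)}^2$ via \eqref{E:Dnormdef} and \eqref{E:f+} yields
\[
|\phi(w)|^2 = |w|^2\,\frac{(B\omega)(w)}{1-|w|^2}\qquad(w\in\DD).
\]
Separately, since $\langle k_w,k_{w'}\rangle_{\cH(b)}$ depends holomorphically on $w'$ and antiholomorphically on $w$, the same is true of $\langle k_w,k_{w'}\rangle_{\cD_\omega}$, and carrying out the off-diagonal version of the computation I would identify $\overline{\phi(w)}$ (up to the factor $w$) with the Bergman projection: one obtains $\phi(w)=w\,(Q\omega)(w)$ up to a modulus-one constant, because $Q\omega(w)=\int_\DD\omega(z)(1-\overline zw)^{-2}\,dA(z)$ is precisely the holomorphic object whose appearance is forced by the mixed kernel $(1-\overline z w_1)^{-1}(1-\overline{w_2}z)^{-1}$ after the cross terms in $\langle k_{w_1},k_{w_2}\rangle_{\cD_\omega}$ are expanded. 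Combining $\phi=cz\,Q\omega$ with the diagonal identity gives $(1-|z|^2)|Q\omega(z)|^2=B\omega(z)$, which is \eqref{E:QBeqn}; and since $\phi\in N^+$ has inner factor exactly $z$ by Theorem~\ref{T:binner}, the factor $Q\omega$ must be outer.

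**The converse.**
For the ``if'' direction, I would run the argument backwards. Given (i), (ii), and (iii), set $\phi(z):=z\,(Q\omega)(z)$. Since $Q\omega$ is outer, $\phi$ is in the Smirnov class with inner factor exactly $z$, so it corresponds (as in \S\ref{S:deBR}) to a unique non-extreme $b$ with pair $(b,a)$; and $\phi$ is genuinely unbounded in general, so $\cH(b)$ is a proper subspace of $H^2$. The goal is to show $\cD_\omega=\cH(b)$ isometrically. The computation above, now read in reverse using \eqref{E:QBeqn}, shows $\cD_\omega(k_w)=\|k_w^+\|_{H^2}^2$ for every Cauchy kernel, hence $\|k_w\|_{\cD_\omega}^2=\|k_w\|_{\cH(b)}^2$; by the polarization/holomorphy argument this extends to $\langle\cdot,\cdot\rangle_{\cD_\omega}=\langle\cdot,\cdot\rangle_{\cH(b)}$ on the linear span of Cauchy kernels. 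Finally I would invoke density: Cauchy kernels span a dense subspace of $H^2$, hence of $\cH(b)$ (whose norm dominates the $H^2$-norm and in which polynomials — and a fortiori rational functions with poles off $\overline\DD$ — are dense by Theorem~\ref{T:nonextreme}), and by (ii) polynomials, hence rational functions with poles off $\overline\DD$, are dense in $\cD_\omega$. A standard completion argument then forces $\cD_\omega$ and $\cH(b)$ to be the same Hilbert space with the same norm. The constant $c\in\TT$ enters because $b$ (hence $\phi$) is only determined by $\cH(b)$ up to the freedom $a(0)>0$, which pins down $\phi$ only up to a unimodular multiple of its outer part; concretely $\phi(z)=cz(Q\omega)(z)$ for some $c\in\TT$.

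**Main obstacle.**
The delicate point is the density/completion step in the converse. Equality of the quadratic forms on the dense span of Cauchy kernels does not by itself give $\cD_\omega=\cH(b)$ as sets — one must know that both spaces are the closures of that common subspace under the common norm. For $\cH(b)$ with $b$ non-extreme this is true because polynomials (equivalently, functions holomorphic near $\overline\DD$, equivalently finite sums of Cauchy kernels and their derivatives) are dense; for $\cD_\omega$ this is exactly where hypothesis (ii) is used, together with (i) which guarantees $\cD_\omega\hookrightarrow H^2$ continuously so that $\|\cdot\|_{\cD_\omega}$-convergence implies pointwise convergence and the two completions can be compared inside $\hol(\DD)$. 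A second, more technical irritation is bookkeeping the exact powers of $(1-|w|^2)$ and the conjugation convention $\overline wz$ versus $\overline zw$ when matching $\cD_\omega(k_w)$ to $B\omega$ and $Q\omega$; these are routine but must be done carefully to land precisely on \eqref{E:QBeqn} rather than a constant multiple of it, and to get $c\in\TT$ rather than $c\in\CC\setminus\{0\}$. I expect no other serious difficulty: everything else is the Cauchy-kernel calculus already deployed in Theorems~\ref{T:Dmu=Hb} and~\ref{T:converse}.
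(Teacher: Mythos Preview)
Your proposal is correct and takes essentially the same route as the paper. The one place where the paper is crisper than your sketch is the extraction of $Q\omega$: having written $\phi(z)=z\phi_o(z)$ via Theorem~\ref{T:binner}, it polarizes the diagonal identity to obtain
\[
\phi_o(z_1)\overline{\phi_o(z_2)}=\int_\DD\frac{1-z_1\overline{z_2}}{(1-\overline w z_1)^{2}(1-\overline{z_2}w)^{2}}\,\omega(w)\,dA(w),
\]
and then sets $z_2=0$ to read off $\phi_o(z_1)\overline{\phi_o(0)}=Q\omega(z_1)$, with $|c|=1$ coming from $|\phi_o(0)|^2=\|\omega\|_{L^1(\DD)}=1$ --- exactly the bookkeeping you flagged as needing care.
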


\begin{proof}
First we prove the `only if'. 
If $\cD_\omega=\cH(b)$, then
clearly $\cD_\omega\subset H^2$.
Also, as remarked in the previous theorem,
$b$ is non-extreme, 
so by Theorem~\ref{T:nonextreme} polynomials are dense in $\cH(b)$,
and thus also in $\cD_\omega$. 
This proves (i) and (ii). 
For (iii), note that equality of norms implies 
$\|k_z^+\|_{H^2}^2=\cD_\omega(k_z)$ for all $z\in\DD$, 
where once again $k_z$ denotes the Cauchy kernel.
This yields the identity
\begin{equation}\label{E:phiomega}
\frac{|\phi(z)|^2}{1-|z|^2}
=\int_\DD\frac{|z|^2}{|1-\overline{z}w|^4}\omega(w)\,dA(w)
\qquad(z\in\DD).
\end{equation}
By Theorem~\ref{T:binner}, 
we have $\phi(z)=z\phi_o(z)$, 
where $\phi_o$ is outer. 
It follows that
\[
|\phi_o(z)|^2=\int_\DD\frac{1-|z|^2}{|1-\overline{z}w|^4}\omega(w)\,dA(w)
\qquad(z\in\DD).
\]
In particular $|\phi_o(0)|^2=\|\omega\|_{L^1(\DD)}=1$. 
Also, polarizing, we obtain
\[
\phi_o(z_1)\overline{\phi_o(z_2)}
=\int_\DD\frac{1-z_1\overline{z}_2}{(1-\overline{w}z_1)^2(1-\overline{z}_2w)^2}\omega(w)\,dA(w)
\qquad(z_1,z_2\in\DD).
\]
Setting $z_2=0$, we find that $\phi_o=c(Q\omega)$, where $|c|=1$. 
Hence $Q\omega$ is outer, 
and substituting this back into \eqref{E:phiomega} gives \eqref{E:QBeqn}. 
This establishes (iii),
and also shows that $\phi(z)=cz(Q\omega)(z)$.

Now we  prove the `if'. 
Suppose that $\omega$ satisfies (i), (ii) and (iii).
Define $\phi(z):=z(Q\omega)(z)$. 
By (iii), the function $\phi$ belongs to the Smirnov class $N^+$,
so it can be written as $\phi=b/a$ for some pair $(b,a)$. 
We claim that $\cD_\omega=\cH(b)$ with equality of norms. 
Property~(i) implies that the norm $\|\cdot\|_{\cD_\omega}$ is well-defined.
Property~(ii) implies that the Cauchy kernels $k_z~(z\in\DD)$ 
span a dense subspace of $\cD_\omega$
(as they do for $\cH(b)$). 
It thus suffices to establish equality of norms,
and by the same argument as in the proof of Theorem~\ref{T:Dmu=Hb},
it is enough to prove that 
$\cD_\omega(k_z)=\|k_z^+\|_{H^2}^2$ for all $z\in\DD$.
This boils down to showing that \eqref{E:phiomega} holds, 
which, with our definition of $\phi$,
is equivalent to equation \eqref{E:QBeqn} of Property~(iii).
This establishes our claim and completes the proof.
\end{proof}

The last theorem obviously begs the question as to 
which weights satisfy properties (i), (ii) and (iii).
In particular:

\begin{question}
Which weights $\omega$ obey the relation \eqref{E:QBeqn}? 
\end{question}

\noindent
According to Theorem~\ref{T:Dmu=Hb},
equation \eqref{E:QBeqn} is satisfied by the weights
\[
\omega_\zeta(z):=
\begin{cases}
\displaystyle\frac{1-|z|^2}{|\zeta-z|^2}, 
&\zeta\in\TT,\\
\displaystyle\log\Bigl|\frac{1-\overline{\zeta}z}{\zeta-z}\Bigr|\frac{2}{1-|\zeta|^2}, 
&\zeta\in\DD,
\end{cases}
\]
as  can also be verified by direct calculation. 
Are there any other solutions?

A possible source of examples are weights that can be expressed as 
the difference of two positive superharmonic functions. 
Such weights are given by the formula \eqref{E:murep}, 
where now $\mu$ is a finite \emph{signed} measure on $\overline{\DD}$.
Part of the argument of the proof of Theorem~\ref{T:converse} carries over to this case, 
showing that $\mu$ must still satisfy the moment relation \eqref{E:moment}. 
This raises another question:
 
 \begin{question}
Which signed measures $\mu$ on $\overline{\DD}$ satisfy the relation \eqref{E:moment}?
\end{question}

\noindent
Obviously this is the case if $\mu$ is a multiple of a Dirac measure. Are there any others?

\subsection*{Acknowledgment}
Part of this research was carried out 
during a Research-in-Teams meeting 
at the Banff International Research Station (BIRS). 
We thank BIRS for its hospitality.

\end{document}